\def\F{\mathbb F}
\def\U{\mathcal U}
\theoremstyle{plain}
\newtheorem{theorem}{Theorem}[section]
\newtheorem*{theorema}{Theorem 1}
\newtheorem{prop}[theorem]{Proposition}
\newtheorem{fact}[theorem]{Fact}
\newtheorem{lemma}[theorem]{Lemma}
\newtheorem{cor}[theorem]{Corollary}
\theoremstyle{definition}
\newtheorem{remark}[theorem]{Remark}
\newtheorem*{quest}{Question}
\def\pf{\par\noindent{\em Proof. }}
\title[Finite rank and pseudofinite groups]{Finite rank and pseudofinite groups}
\author{Daniel Palac\'in}
\thanks{Research partially supported by the program MTM2014-59178-P and the European Research Council grant 338821.}  
\keywords{Pseudofinite group, Finite generation, Pr\"ufer rank}
\subjclass[2010]{20A05, 03C60}
\date{\today}
\address{Einstein Institute of Mathematics, The Hebrew University of Jerusalem, Givat Ram 9190401, Jerusalem, Israel}
\email{daniel.palacin@mail.huji.ac.il}
\begin{document}

\begin{abstract}
It is proven that  an infinite finitely generated group cannot be elementarily equivalent to an ultraproduct of finite groups of a given Pr\"ufer rank. Furthermore, it is shown that an infinite finitely generated group of finite Pr\"ufer rank is not pseudofinite.
\end{abstract}

\maketitle

\section{Introduction}

A {\em pseudofinite} group is an infinite model of the first-order theory of finite groups, or equivalently, an infinite group is pseudofinite if it is elementary equivalent to an ultraproduct of finite groups. In a similar way, one can define pseudofinite fields, which were first considered by Ax \cite{Ax}. These two notions are closely related as exhibited by Wilson in \cite{Wil}, who showed that a simple pseudofinite group is elementarily equivalent to a Chevalley group over a pseudofinite field. Furthermore, it was later observed  by Ryten \cite{Ryten} that it is even isomorphic to such a group, see \cite[Proposition 2.14(i)]{EJMR} for a more detailed argument.

In a pseudofinite group any definable map is injective if and only if it is surjective. Hence, it can be easily noticed, by considering the definable map $x\mapsto n\cdot x$ for a suitable positive integer $n$, that an infinite finitely generated abelian group is not pseudofinite. Consequently, since centralizers of non-identity elements in free groups are infinite cyclic groups, free groups, and in particular finitely generated ones, are
not pseudofinite.  In fact, Sabbagh raised the following question, which so far remains open:

\begin{quest}
Are there infinite finitely generated groups which are pseudo-finite?
\end{quest}

One can of course reformulate this question and ask which families of finite groups admit an ultraproduct elementarily equivalent to an infinite finitely generated group. Using deep results from finite group theory,  Ould Houcine and Point \cite{OulPoi} have made substantial progress in this direction and towards the understanding of the structure of a possible example. They showed that an infinite finitely generated pseudofinite group cannot be solvable-by-(finite exponent), generalizing a previous result of Kh\'elif, see \cite[Proposition 3.14]{OulPoi} for more details. In particular, an infinite finitely generated cannot be elementarily equivalent to a family $\{G_i\}_{i\in\mathbb N}$ of finite groups such that the solvable radical $R(G_i)$ of $G_i$ has a given derived length and that $G_i/R(G_i)$ has a fixed finite exponent.

The aim of this note is to obtain a similar result for another well-understood family of finite groups, finite groups of bounded Pr\"ufer rank. Recall that a group is said to have finite {\em Pr\"ufer rank} $r$ if every finitely generated subgroup can be generated by $r$ elements, and $r$ is the least such integer.  We prove the following:

\begin{theorema}\label{T:Main}
There is no infinite finitely generated group which is elementarily equivalent to an ultraproduct of finite groups of a given Pr\"ufer rank.
\end{theorema}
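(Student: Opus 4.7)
Suppose for contradiction that $G$ is an infinite finitely generated group elementarily equivalent to an ultraproduct $H = \prod_{i\in I} G_i/\U$ where each $G_i$ is a finite group of Pr\"ufer rank at most $r$. The strategy is to exhibit in $G$ a normal solvable subgroup of bounded derived length and finite index, and then invoke the Ould Houcine--Point theorem quoted in the introduction, which prevents an infinite finitely generated pseudofinite group from being solvable-by-(finite exponent).

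The main technical input is a structural lemma about finite groups of bounded Pr\"ufer rank: there exist integers $d = d(r)$ and $e = e(r)$ such that every finite group $K$ of Pr\"ufer rank at most $r$ admits a normal solvable subgroup of derived length at most $d$ and index at most $e$. Two classical ingredients combine to give this. First, a theorem of Kov\'acs bounds the derived length of a finite solvable group of Pr\"ufer rank at most $r$ by some $d(r)$; applied to the solvable radical $R(K)$, this produces the required $d$. Second, by the classification of finite simple groups, the non-abelian finite simple groups of Pr\"ufer rank at most $r$ form a finite list up to isomorphism: alternating groups $A_n$ have Pr\"ufer rank of order $n/2$, and a finite simple group of Lie type over $\F_q$ contains an elementary abelian $p$-subgroup of rank $\sim \log_p q$ (for instance inside a root subgroup), so both the Lie type and the size of the underlying field are forced to be bounded. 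Combined with the standard bound on the number of non-abelian composition factors of a group of Pr\"ufer rank $r$, this yields a uniform bound $|K/R(K)| \leq e(r)$.

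The existence of a normal subgroup of index at most $e$ whose $d$-th derived subgroup is trivial is a first-order property in the language of groups (one quantifies existentially over coset representatives and imposes the $d$-th derived law on the identity coset). By the structural lemma it holds in every $G_i$, so it holds in $H$ by \L o\'s's theorem, and thus in $G$ by elementary equivalence. Therefore $G$ admits a normal solvable subgroup of derived length at most $d(r)$ and index at most $e(r)$, so $G$ is solvable-by-finite, in particular solvable-by-(finite exponent). Since $G$ is also infinite, finitely generated, and pseudofinite (being elementarily equivalent to an ultraproduct of finite groups), this contradicts the Ould Houcine--Point theorem.

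The main obstacle is the structural lemma---specifically the CFSG-based bound on $|K/R(K)|$. Once that is in place, the rest of the proof is a routine combination of \L o\'s's theorem and the cited Ould Houcine--Point result.
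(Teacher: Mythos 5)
Your reduction hinges on the structural lemma asserting a uniform bound $e(r)$ on the index of the solvable radical in a finite group of Pr\"ufer rank at most $r$, and that lemma is false; the failure is precisely the difficulty this paper is organised around. Bounding the Pr\"ufer rank of a simple group of Lie type over $\F_q$, $q=p^f$, does bound the Lie rank and the degree $f$ (a root subgroup is elementary abelian of rank a bounded multiple of $f$), but it does \emph{not} bound the characteristic $p$, so the order of the field is not bounded. Concretely, by Dickson's classification of the subgroups of ${\rm PSL}_2(p)$ (cyclic, dihedral, $A_4$, $S_4$, $A_5$, and metacyclic subgroups of the Borel), every subgroup of ${\rm PSL}_2(p)$ is $2$-generated; hence the groups ${\rm PSL}_2(p)$, $p$ prime, form an infinite family of non-abelian finite simple groups of Pr\"ufer rank $2$ with trivial solvable radical and unbounded order. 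There is therefore no bound $e(r)$, the first-order sentence you want to transfer by \L o\'s's theorem does not hold in the $G_i$, and the conclusion that $G$ is solvable-by-finite does not follow. (Indeed, a non-principal ultraproduct of the ${\rm PSL}_2(p)$ is an infinite simple pseudofinite group, isomorphic to ${\rm PSL}_2$ of a pseudofinite field, which is as far from solvable-by-finite as possible.) A secondary, patchable issue: even where your lemma does apply, expressing ``has a normal solvable subgroup of derived length $\le d$ and index $\le e$'' in first order requires the Nikolov--Segal width bounds for commutator words in the ($r$-generated) groups $G_i$; it is not a matter of just quantifying over coset representatives.

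This gap is exactly why the paper cannot simply quote Ould Houcine--Point and stop. After splitting off Wilson's definable solvable radical $\phi_s(G)$, the paper must deal with an a priori \emph{infinite} semisimple quotient: it extracts from it, via Lemma \ref{L:Fin2rk} and the Macpherson--Tent generation theorem, an infinite definable simple normal subgroup of fixed Lie type and bounded Lie rank over a possibly infinite (pseudofinite) field; it then proves that this subgroup is itself finitely generated (Lemma \ref{L:Sim}) and linear (Point's theorem), and derives the contradiction from Malcev's theorem that finitely generated linear groups are residually finite. Thus finite generation is used in an essential, non-uniform way inside the semisimple part, not merely fed into the solvable-by-(finite exponent) obstruction at the end. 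Your treatment of the solvable part itself (bounded derived length of $R(G_i)$ via Kov\'acs, then transfer) is reasonable in spirit and is the easier half --- the paper handles it through Segal's theorem on finitely generated groups whose finite quotients are solvable of bounded rank --- but the argument as proposed collapses on the semisimple half.
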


This is deduced from a slightly stronger result where the assumption on the Pr\"ufer rank is relaxed, see Proposition \ref{P:Main}. There, it is assumed that only specific quotients have finite rank. As a consequence from this stronger result, we immediately obtain in Corollary \ref{C:Main} that there is no infinite finitely generated pseudofinite group of finite Pr\"ufer rank.

\section{Preliminaries on finite and pseudofinite groups}

Let $G$ be an arbitrary group. Given a subset $X$ of $G$ and a positive integer $k$, we write 
$$
X^{*k} = \{x_1\dots x_k \, : \, x_1,\ldots,x_k \in X\}
$$
and denote by $\langle X \rangle$ the subgroup of $G$ generated by $X$, {\it i.e.}
$$
\langle X \rangle = \bigcup_{k\in \mathbb N} (X\cup X^{-1})^{*k}.
$$ 
Given a word $w(\bar x)$ in the variables $\bar x = (x_1,\ldots,x_m)$, let $w(G)$ denote the set of elements $w(\bar g)$ of $G$ for $m$-tuples $\bar g$ from $G$.
 As $w(G)$ is invariant under ${\rm Aut}(G)$, the verbal subgroup $\langle w(G) \rangle$ is characteristic but not necessarily definable in the pure language of groups.  The width of the word $w(\bar x)$ in $G$ is the smallest integer $k$, if it exists, such that $\langle w(G) \rangle  = w(G)^{*k}$. In particular, if $w(\bar x)$ has finite width in $G$, then the verbal group generated by $w(G)$ is clearly definable.
 
We focus our attention on two families of words: The family of words $w_n(x)=x^n$ of $n$th powers for each positive integer $n$ and the family of simple commutator words $w_{c,n}(x_1,\ldots,x_{2^n})$ defined recursively on $n$ as follows
$$
w_{c,1}(x_1,x_2)=[x_1,x_2]=x_1^{-1}x_2^{-1}x_1x_2
$$
and
$$
w_{c,n+1}(x_1,\ldots,x_{2^{n+1}}) = \big[  w_{c,n}(x_1,\ldots,x_{2^n}) ,  w_{c,n}(x_{2^n+1},\ldots,x_{2^{n+1}}) \big].
$$ 

In a series of papers, Nikolov and Segal obtained deep results concerning the finite width of certain families of words. Of special interest for our purposes are \cite[Theorem 1.7]{NikSeg1} (see also \cite[Theorem 1.2]{NikSeg3}) and \cite[Theorem 2]{NikSeg2}, where they show the existence of positive integers $\delta_c(d)$ and $\delta_n(d)$ such that the words $w_{c,1}$ and $w_n$ have width $\delta_c(d)$ and $\delta_n(d)$ in any $d$-generator finite group, respectively. In \cite{OulPoi}, Ould Houcine and Point noticed that these results transfer to pseudofinite groups by showing that these words have finite width in any group that is elementarily equivalent to  a non-principal ultraproduct of $d$-generator finite groups, see Proposition 3.7 there. As a consequence, the verbal subgroups generated by $w_{c,1}(G)$ and $w_n(G)$ are definable. Furthermore, they proved the following in \cite[Proposition 3.12]{OulPoi}:

\begin{fact}\label{F:DerBurn}
Let $G$ be a finitely generated pseudofinite group. Then the words $w_{c,n}$ and $w_n$ have finite width in $G$. Moreover, the subgroups $G^{(n)}=\langle w_{c,n}(G) \rangle$ and $G^n = \langle w_n(G) \rangle$ are definable and have finite index.
\end{fact}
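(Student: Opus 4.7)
The plan is to prove the statement by induction on $n$, with the base case $n=1$ carrying the main content and the inductive step reducing higher derived subgroups to the base case applied to a lower derived subgroup.

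For the base case, the finite-width statements for $w_{c,1}$ and $w_n$ are the crucial input. I would exploit that ``$w$ has width at most $k$'' is equivalent to the first-order sentence $w(G)^{*(k+1)} \subseteq w(G)^{*k}$, since a single stabilisation forces all further ones. By the Nikolov--Segal theorems, every $d$-generator finite group satisfies this sentence with $k=\delta_c(d)$ for $w_{c,1}$ and $k=\delta_n(d)$ for $w_n$. To transfer this bound to $G$, I would fix a generating $d$-tuple $\bar g$ and work in the expanded language $L\cup\{c_1,\ldots,c_d\}$; pseudofiniteness of $(G,\bar g)$ combined with compactness yields an ultraproduct $\prod(F_i,\bar a_i)/\mathcal U$ of finite pointed groups elementarily equivalent to $(G,\bar g)$, and one then wants to replace each $F_i$ by $\langle \bar a_i\rangle$ to obtain $d$-generated finite factors to which Proposition~3.7 of Ould Houcine and Point applies. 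Justifying this replacement is the main obstacle of the proof, because ``$\bar c$ generates the ambient group'' is not first-order and the replacement may a priori alter the theory.

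Once finite width is established, $G^{(1)}$ and $G^n$ are automatically definable. Finite index follows from pseudofiniteness: $G/G^{(1)}$ is finitely generated abelian and, being an interpretable quotient of $G$ by a definable subgroup, is itself pseudofinite, whence finite by the introductory observation that a finitely generated pseudofinite abelian group must be finite. Similarly, $G/G^n$ is a finitely generated pseudofinite group of exponent dividing $n$; Zelmanov's positive solution of the restricted Burnside problem provides a uniform order bound $B(d,n)$ valid in every $d$-generator finite group of exponent $n$, and this first-order bound transfers to force $G/G^n$ finite.

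For the inductive step, assume the statement holds for $n$. Then $G^{(n)}$ is a definable finite-index subgroup of $G$, hence finitely generated and pseudofinite as a definable substructure. Applying the base case to $G^{(n)}$ shows that $G^{(n+1)}=(G^{(n)})^{(1)}$ is definable and of finite index in $G^{(n)}$, and hence in $G$. Finite width of $w_{c,n+1}$ in $G$ then follows by a compactness argument in a sufficiently saturated elementary extension: since $G^{(n+1)}=\langle w_{c,n+1}(G)\rangle$ is now definable and equals the ascending union of the definable subsets $w_{c,n+1}(G)^{*k}$, saturation forces stabilisation of the chain, and the stabilisation point is first-order and transfers back to $G$. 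Everything else in the inductive step is routine; the only delicate point throughout the proof is the $d$-generator reduction in the base-case transfer.
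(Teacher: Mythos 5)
The paper does not actually prove this statement: it is quoted as \cite[Proposition 3.12]{OulPoi}, so the only available comparison is with Ould Houcine and Point's argument. Your outline contains several correct ingredients: the observation that width at most $k$ is captured by the sentence $w(G)^{*(k+1)}\subseteq w(G)^{*k}$ (valid here because $1\in w(G)$ and $w(G)=w(G)^{-1}$ for these particular words), the deduction of finite index from pseudofiniteness of the definable quotients $G/G'$ and $G/G^n$, and the induction on $n$ via $\langle w_{c,n+1}(G)\rangle=\big(\langle w_{c,n}(G)\rangle\big)'$ together with a saturation argument. But the step you yourself flag as ``the main obstacle'' is a genuine gap, and your proof does not close it. Replacing each $F_i$ by $H_i=\langle\bar a_i\rangle$ produces an ultraproduct $\prod_{\mathcal U}H_i$ in which $G$ sits only as a subgroup (it contains $\bar g$ and $G=\langle\bar g\rangle$), not as an elementary substructure; the Nikolov--Segal width bounds therefore hold in $\prod_{\mathcal U}H_i$ but say nothing about which elements of $G'$ are products of few commutators \emph{of elements of $G$}. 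Nor can one transfer the sentence ``every product of $k+1$ word values is a product of $k$ word values'' directly from the class of all finite groups, since that sentence is simply false in finite groups requiring many generators.

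The actual resolution is that Nikolov and Segal prove their results in a form whose quantifier structure makes the transfer possible: roughly, for each $d$ there is $f(d)$ such that in \emph{every} finite group $F$ and for \emph{every} $d$-tuple $\bar y$, each element of $[\langle\bar y^F\rangle,F]$ is a product of $f(d)$ commutators $[x,y_i^{\pm g}]$ with $x,g\in F$ (with an analogous statement for $n$-th powers incorporating the restricted Burnside bound $[\langle\bar y\rangle:\langle\bar y\rangle^n]\le B(d,n)$). For fixed $d$ this is a single first-order sentence, universally quantified over $\bar y$ and true in all finite groups with no hypothesis on their number of generators; it passes to $G$ by \L os's Theorem and elementary equivalence, and one concludes by instantiating $\bar y$ at a generating $d$-tuple of $G$, whose normal closure is $G$ itself. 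This formulation also repairs your finite-index argument for $G^n$, which as written invokes Zelmanov's bound for $d$-generator finite groups of exponent $n$ without explaining why it applies to $G/G^n$ (a group not known in advance to be residually finite). With that key lemma in place, the remainder of your outline --- definability from finite width, finiteness of $G/G'$ and $G/G^n$, and the inductive step --- does go through.
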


It is worth noticing that $G^n$ has finite index in $G$ by Zelmanov's positive solution to the restricted Burnside problem. Next, using the fact above we note the following:
 
\begin{lemma}\label{L:Sim}
Let $G$ be a finitely generated pseudofinite group and assume that it contains an infinite definable definably simple non-abelian normal subgroup $N$. Then, the group $N$ is a finitely generated pseudofinite group.
\end{lemma}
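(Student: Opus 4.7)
My plan proceeds in two parts: pseudofiniteness of $N$, then finite generation.

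For pseudofiniteness, since $G$ is pseudofinite, $G \equiv \prod_{\U} G_i$ for some ultrafilter $\U$ and finite groups $G_i$. Let $\phi(x)$ be a formula defining $N$. The statement ``$\phi$ defines a subgroup'' is first-order, so by {\L}o\'s's theorem $\phi(G_i)$ is a subgroup of $G_i$ for $\U$-almost every $i$. Hence $N \equiv \prod_{\U}\phi(G_i)$ is elementarily equivalent to an ultraproduct of finite groups, and being infinite, $N$ is pseudofinite.

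The substantive part is finite generation, where definable simplicity enters. My first step is to establish that $[N,G]$ is definable in $G$: the Nikolov--Segal theorem on commutator widths in normal subgroups of a $d$-generator finite group is a close sibling of the results quoted in Fact~\ref{F:DerBurn}, and the same ultraproduct transfer argument as in Proposition~3.7 of \cite{OulPoi} should yield that $[N,G]$ is expressible as a bounded-length product of commutators, hence definable. Since $[N,G]$ is a normal subgroup of $G$ contained in $N$, it is a definable normal subgroup of $N$; by definable simplicity, $[N,G] \in \{1,N\}$. Triviality forces $N \leq Z(G)$, hence $N$ abelian, contradicting the hypothesis. So $[N,G]=N$.

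To obtain finite generation, fix a non-identity $n_0 \in N$; its $G$-normal closure $\langle n_0^G\rangle$ is a definable normal subgroup of $N$, and by definable simplicity equals $N$. Writing $G = \langle x_1,\ldots,x_d\rangle$ and invoking the Nikolov--Segal bound that expresses elements of $[N,G]=N$ as a bounded product of commutators of the form $[n,x_i^{\pm 1}]$, I would extract a finite generating set of $N$ consisting of finitely many conjugates of $n_0$ by short words in $x_1,\ldots,x_d$.

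The main obstacle is this last step: passing from ``$N$ is the normal closure of a single element'' to a genuinely finite generating set of $N$ as an abstract group. In general this implication fails — for instance, the commutator subgroup of $F_2$ is the normal closure of $[a,b]$ but is not finitely generated — so the argument must use the pseudofiniteness of $N$ together with the uniform Nikolov--Segal width bounds to bound the ``depth'' of conjugation needed, exploiting definable simplicity essentially.
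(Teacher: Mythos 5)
Your first part (pseudofiniteness of $N$) is correct and matches the paper. The finite generation argument, however, has a genuine gap --- one you candidly flag yourself in your last paragraph, but flagging it does not close it. Knowing that $[N,G]=N$ and that every element of $N$ is a product of boundedly many commutators $[n,x_i^{\pm 1}]$ does not produce a finite generating set: the entries $n$ in those commutators range over all of $N$, not over bounded words in the $x_i$, so no finite set of ``conjugates of $n_0$ by short words'' is exhibited. Likewise, ``$N$ is the normal closure of a single element'' (even granting definability of that normal closure, which itself needs an argument) is strictly weaker than finite generation, exactly as your $F_2$ example shows. Nothing in your sketch supplies the missing mechanism, and it is not clear that any amount of width-bounding alone will: width bounds control how elements of $N$ are written in terms of \emph{arbitrary} elements of $N$, which is the wrong direction for generation.

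The paper closes this gap by a completely different device: it realizes $N$ as a \emph{quotient of a finite-index subgroup of $G$}. Concretely, using Frayne's theorem and a result of U\u gurlu, $N$ corresponds to finite simple normal subgroups $N_i \trianglelefteq G_i$; the conjugation action embeds $G_i/C_{G_i}(N_i)$ into ${\rm Aut}(N_i)$, and the Schreier conjecture (via the classification) makes ${\rm Aut}(N_i)/{\rm Inn}(N_i)$ solvable of derived length three. This is first-order, so $G/(N\,C_G(N))$ is solvable of derived length three, hence finite by Fact \ref{F:DerBurn} (this is where finite generation of $G$ and the Nikolov--Segal widths actually enter). Then $N\,C_G(N)$ has finite index in the finitely generated group $G$, so it is finitely generated, and $N \cong N\,C_G(N)/C_G(N)$ (since $Z(N)=1$ by definable simplicity) is a quotient of it, hence finitely generated. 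That reduction --- bounding $G$ modulo $N\,C_G(N)$ rather than trying to generate $N$ from inside --- is the idea your proposal is missing.
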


Here, by a {\em definably simple} group we mean a group that has no proper non-trivial normal subgroups definable in the pure language of groups. 

\begin{proof}
Let $G$ be a finitely generated pseudofinite group $G$ and assume that it contains an infinite definably simple non-abelian normal subgroup $N$ which is definable by some formula $\phi(x,\bar a)$. By Frayne's Theorem \cite[Corollary 4.3.13]{ChanKei}, there exists some non-principal ultraproduct of an infinite family $\{G_i\}_{i\in I}$ of finite groups in which $G$ is elementarily contained. Let $\bar a_i$ be a representative of the equivalence class $\bar a$ in $G_i$ and let $N_i$ be the set $\phi(G_i,\bar a_i)$. Since by \L os's Theorem, almost every subset $N_i$ is a normal subgroup of $G_i$, we may assume that every subset $N_i$ is indeed a normal subgroup of $G_i$. Furthermore, as $N$ is elementary equivalent to an ultraproduct of the family $\{N_i\}_{i\in I}$, we may assume by \cite[Corollary 2.10]{Urg} that every subgroup $N_i$ is a finite simple group. 

The action of $G_i$ by conjugation on each $N_{i}$ gives rise to an embedding from $G_i/C_{G_i}(N_{i})$ into the group of automorphisms ${\rm Aut}(N_{i})$ of $N_{i}$, yielding a subgroup containing the group of inner automorphisms ${\rm Inn}(N_{i})$ of $N_{i}$. It is clear that ${\rm Inn}(N_{i})$ is isomorphic to $N_{i}$ since the latter is centerless. Furthermore, we known by \cite[Chapter 4.B page 304]{Gor} that the group ${\rm Aut}(N_{i})/{\rm Inn}(N_{i})$ is solvable of derived length three, and then so is each $G_i/ (N_{i} \, C_{G_i}(N_{i}))$. 

Now, since the normal subgroup $N_{i} \, C_{G_i}(N_{i})$ is definable over $\bar a_i$, it can be expressed as a first-order property of the tuple $\bar a_i$ that $G_i$ modulo $N_{i} \, C_{G_i}(N_{i})$ is solvable of derived length three. Namely, it suffices to express that any element of $G_i$ of the form $w_{c,3}(x_1,\ldots,x_8)$
belongs to $N_{i} \, C_{G_i}(N_{i})$. Hence, we obtain by \L os's Theorem that the pseudofinite group $G/(N\, C_G(N))$ is also solvable of derived length three and so it is finite by Fact \ref{F:DerBurn}. Whence, the group $N\, C_G(N)$ is finitely generated and so is $N\, C_G(N)/C_G(N)$, which is clearly isomorphic to $N$, yielding the result. 
\end{proof}

Another crucial point in our proofs, which also relies on the work of Nikolov and Segal \cite{NikSeg1, NikSeg3}, is a result due to Wilson \cite{Wil2}.

\begin{fact}\label{F:Solvable}
There exists a formula $\phi_s(x)$ in the pure language of groups that defines the solvable radical of any finite group.
\end{fact}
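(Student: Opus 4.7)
The plan is to combine two deep results on finite groups. First, I would invoke a pointwise characterization of the solvable radical, due to Guralnick and Wilson (building on Flavell's work and the classification of finite simple groups): for any finite group $G$, an element $g$ belongs to $R(G)$ if and only if the two-generator subgroup $\langle g, h\rangle$ is solvable for every $h \in G$. Second, I would use the theorem of Bray, Wilson and Xu asserting the existence of an explicit two-variable word $w(x,y)$ such that a finite group $H$ is solvable if and only if $w(a,b) = 1$ for every pair $a,b \in H$.

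Granted both ingredients, I would define
$$
\phi_s(x) \;\equiv\; \forall y \; w(x,y) = 1.
$$
It then suffices to verify: if $g \in R(G)$, then every $\langle g, h\rangle$ is solvable and hence satisfies $w \equiv 1$, so in particular $w(g,h) = 1$ for all $h$; conversely, if $\phi_s(g)$ holds, then $\langle g, h\rangle$ satisfies the identity $w \equiv 1$ and is therefore solvable for every $h \in G$, which by the pointwise characterization places $g$ in $R(G)$. Since $\phi_s$ is a single formula in the pure language of groups, independent of the choice of $G$, this yields the desired uniform definability.

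The main obstacle will be obtaining the uniform word $w$ from Bray–Wilson–Xu: its construction ultimately rests on the classification of finite simple groups, reducing solvability to the absence of certain simple sections and then extracting an explicit identity witnessing this. The Guralnick–Wilson pointwise characterization of $R(G)$ is itself nontrivial and similarly draws on deep CFSG-based machinery; in particular, one cannot derive $g \in R(G)$ from solvability of all 2-generator subgroups $\langle g, h\rangle$ by elementary means, since the naive hope of closing $R(G)$ under products of such elements fails without substantial structural input.
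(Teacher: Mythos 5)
The paper offers no proof of this Fact at all: it is imported verbatim from Wilson \cite{Wil2}, so your argument has to stand on its own, and it does not. Your first ingredient, the two-element characterization $g\in R(G)\Leftrightarrow\langle g,h\rangle$ is solvable for every $h\in G$, is a genuine CFSG-based theorem (due to Guralnick, Kunyavskii, Plotkin and Shalev together with Gordeev et al., building on Flavell --- not ``Guralnick and Wilson''), and it is a sensible starting point. The second ingredient is false as stated: there is \emph{no} nontrivial word $w(x,y)$ that is a law in every finite solvable group. Indeed, the free group $F_2$ is residually a finite $p$-group, and finite $p$-groups are solvable, so any word vanishing identically on all finite solvable (even nilpotent) groups already vanishes in $F_2$, i.e.\ is the trivial word; your formula $\forall y\, w(x,y)=1$ would then define all of $G$ rather than $R(G)$. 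What Bray, Wilson and Wilson actually prove is the existence of a \emph{sequence} of two-variable words $s_n$ such that a finite group is solvable iff $s_n$ is a law in it for \emph{some} $n$, and the required $n$ grows with the group (roughly with its Fitting height), so no single first-order formula can be extracted this way.

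There is a second, independent logical gap: even granting such a $w$, from $\phi_s(g)$, i.e.\ $w(g,h)=1$ for every $h\in G$, you cannot conclude that $\langle g,h\rangle$ \emph{satisfies the law} $w\equiv 1$; that would require $w(a,b)=1$ for all pairs $a,b$ ranging over the subgroup $\langle g,h\rangle$, whereas you only control the pairs $(g,h)$ themselves. So the converse implication in your verification is a non sequitur. Wilson's actual argument in \cite{Wil2} takes a different route: using the Nikolov--Segal bounds on commutator width in finite groups, non-solvability (of a group, and of the normal closure of a given element) is witnessed by a configuration of boundedly many elements, hence by a fixed existential formula, and the definition of $R(G)$ is assembled from that. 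A repair of your outline would keep the two-element radical characterization but replace the purported solvability law by such a bounded-width, first-order expression of ``the relevant normal closure has a nontrivial perfect quotient-free part,'' which is exactly where the Nikolov--Segal machinery mentioned in the paper enters.
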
 
It is clear that Wilson's formula induces a characteristic subgroup in any pseudofinite group. {\it A priori} this subgroup might not be solvable as witnessed by an ultraproduct of finite solvable groups without a common bound on their derived lengths. Nevertheless, as a definable factor of a pseudofinite group is also pseudofinite, see \cite[Lemma 2.16]{OulPoi}, the existence of such formula allows us to split a pseudofinite group into a semisimple part and a solvable-like one.  Here, by a semisimple group we mean a group without non-trivial abelian normal subgroups. In contrast with being simple, notice that semisimplicity is an elementary property. Namely, a group $G$ is semisimple if and only if it satisfies a first-order sentence asserting that there is no non-trivial element which commutes with all its conjugates, since for any element $a$ the abelian normal subgroup $Z(C_G(a^G))$ of $G$ is always definable.

\section{Proof of the Theorem}
 We first need a lemma on finite simple groups, which of course may be well-known. Our proof is inspired by the one of \cite[Lemma 4.2]{ManSeg}. 

\begin{lemma}\label{L:Fin2rk}
If $S$ is a finite simple non-abelian group of finite $2$-rank $r$, then either $|S|$ is $r$-bounded or $S$ is a finite simple group of Lie type of $r$-bounded Lie rank over a finite field.
\end{lemma}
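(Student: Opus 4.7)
The natural approach is to invoke the classification of finite simple groups, which splits $S$ into three families: sporadic, alternating, or of Lie type. The sporadic case is immediate, since there are only $26$ such groups and therefore $|S|$ is bounded by an absolute constant, hence a fortiori $r$-bounded.

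For $S = A_n$ with $n \geq 5$, the plan is to exhibit an elementary abelian $2$-subgroup of $A_n$ whose rank grows linearly in $n$. Concretely, the Klein four-group $\{e,(12)(34),(13)(24),(14)(23)\}$ sits inside $A_4$ and has rank $2$, so $A_n$ contains an internal direct product of $\lfloor n/4 \rfloor$ pairwise disjoint copies of it acting on disjoint quadruples, giving an elementary abelian subgroup of rank $2\lfloor n/4 \rfloor$. Hence $r \geq \lfloor n/2 \rfloor - 1$, which forces $n$, and therefore $|A_n|$, to be bounded by an explicit function of $r$.

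For Lie type, let $S$ have Lie rank $\ell$ over $\mathbb{F}_q$; the goal is to bound $\ell$ in terms of $r$. If $q$ is odd, a maximally split torus of an untwisted Chevalley group contains $\{\pm 1\}^{\ell} \leq (\mathbb{F}_q^\times)^{\ell}$, yielding an elementary abelian $2$-subgroup of rank at least $\ell$; a mildly adjusted argument using a maximally split torus of a twisted group gives a lower bound linear in $\ell$ up to absolute constants. If $q$ is even, one instead works inside the unipotent radical of a Borel: for example in type $A_\ell$ realised as $\mathrm{SL}_{\ell+1}(\mathbb{F}_{2^k})$, the matrices $I + E_{1,j}$ for $j = 2,\dots,\ell+1$ (with $E_{1,j}$ the matrix having a $1$ in position $(1,j)$ and zeros elsewhere) commute pairwise and generate an elementary abelian $2$-subgroup of rank $\geq \ell$; an analogous construction using a maximal antichain of pairwise orthogonal positive roots and the corresponding root subgroups handles every other classical type.

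The main technical obstacle will be to execute this lower bound uniformly across all Lie types, in particular for the twisted Steinberg variants and for characteristic $2$ groups where the split torus argument fails. The small-rank exceptional twisted families $^2B_2$, $^2G_2$, $^2F_4$ and $^3D_4$ have Lie rank at most $2$ and so impose no restriction. Once the bound $\ell \leq f(r)$ is established in each case, the three branches of CFSG combine to yield the desired dichotomy: either $S$ is sporadic or alternating and hence $|S|$ is $r$-bounded, or $S$ is of Lie type of $r$-bounded Lie rank.
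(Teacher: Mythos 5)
Your proposal is correct in outline and agrees with the paper on the overall skeleton (CFSG, the sporadic groups being absorbed into an absolute bound, and the alternating case via $\lfloor n/4\rfloor$ disjoint Klein four-groups giving $2$-rank at least $2\lfloor n/4\rfloor$), but it diverges on the only nontrivial case, the groups of Lie type, where the paper is both slicker and more uniform. You propose to exhibit a large elementary abelian $2$-subgroup directly inside $S$, splitting by characteristic: a maximally split torus containing $\{\pm 1\}^{\ell}$ when $q$ is odd, and commuting root elements such as $I+E_{1,j}$ (or root subgroups attached to pairwise orthogonal positive roots) when $q$ is even, with further adjustments for the twisted types. This works, but it forces you to carry out a case analysis over Dynkin types, twistings, and characteristics that you only sketch; the orthogonal-roots construction for types $B_\ell$, $C_\ell$, $D_\ell$ in characteristic $2$ and the twisted classical families ${}^2A_\ell$, ${}^2D_\ell$ each need their own verification. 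The paper avoids all of this with one observation: a classical simple group of Lie rank $n$ over $\F_q$ contains a copy of ${\rm PSL}_m(\F_q)$ with $m=\max\{2,\lfloor\frac{1}{2}(n-1)\rfloor\}$ (a Levi-type subgroup), and ${\rm Alt}(m)$ embeds into ${\rm PSL}_m(\F_q)$ via permutation matrices regardless of the characteristic, so the Lie-type case reduces to the alternating case already treated and no characteristic or twisting dichotomy ever arises (the exceptional types having absolutely bounded Lie rank, as you note). Your route buys explicit, self-contained lower bounds on the $2$-rank in terms of $\ell$; the paper's route buys uniformity and brevity at the cost of quoting the existence of the ${\rm PSL}_m$ subgroup. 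Either way the conclusion $\ell\le f(r)$ follows, so your argument, once the sketched cases are filled in, is a valid alternative proof.
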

Here and throughout the paper, by {\em $r$-bounded} we mean that there is a bound depending only on $r$.
\pf We know by the Classification of Finite Simple Groups that a finite simple non-abelian group is either an alternating group, a simple group of Lie type or one of the finitely many sporadic groups. 

It is clear that we can bound the size of all sporadic groups. Moreover, since the alternating group ${\rm Alt}(n)$ contains an elementary abelian $2$-subgroup of rank $2\cdot \lfloor n/4 \rfloor$, the size of an alternating group of $2$-rank $r$ is also $r$-bounded.

Suppose that $S$ is a finite simple group of Lie type of Lie rank $n$ over the finite field $\F_q$ and assume further that it has $2$-rank $r$. To bound the Lie rank in terms of $r$, it suffices to assume that $S$ is a classical simple group. In that case, we have that $S$ contains ${\rm PSL}_m(\F_q)$, where $m = \max\{2,\lfloor\frac{1}{2}(n-1)\rfloor\}$.
Thus, since ${\rm Alt}(m)$ embeds of into ${\rm PSL}_m(\mathbb F_q)$ via permutation matrices, the paragraph above implies that $m$ and hence $n$ are $r$-bounded. \qed

\begin{lemma}\label{L:Semi-simRank}
A semisimple pseudofinite group of finite $2$-rank has an infinite definable non-abelian simple  normal subgroup which is a linear group.
\end{lemma}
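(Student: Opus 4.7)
The plan is to pass to an ultraproduct of finite groups via Frayne's theorem and exploit the rigid structure that the bounded $2$-rank imposes on the socle through Lemma \ref{L:Fin2rk}.

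\emph{Reduction.} By Frayne's theorem I would elementarily embed $G\preceq H=\prod_{i\in I} G_i/\mathcal U$ in a non-principal ultraproduct of finite groups. Both semisimplicity (the first-order criterion recalled before Fact \ref{F:Solvable}) and having $2$-rank at most $r$ are first-order, so we may assume each $G_i$ is semisimple finite of $2$-rank at most $r$. Such a $G_i$ has trivial solvable radical, so its socle
$$\operatorname{Soc}(G_i)=T_{i,1}\times\cdots\times T_{i,k_i}$$
is a direct product of non-abelian finite simple groups, with $k_i\leq r$ because involutions in distinct factors commute.

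\emph{Finding a large simple component.} Applying Lemma \ref{L:Fin2rk} componentwise, each $T_{i,j}$ is either of $r$-bounded size or is a simple group of Lie type of $r$-bounded Lie rank over a finite field. In a finite semisimple group one has $C_{G_i}(\operatorname{Soc}(G_i))=1$, so $G_i$ embeds into $\operatorname{Aut}(\operatorname{Soc}(G_i))$; if every $T_{i,j}$ were of $r$-bounded size, then $|G_i|$ would be $r$-bounded and the ultraproduct $H$ would be finite, contradicting that $G$ is infinite. Shrinking the ultrafilter, for every $i$ there is a simple component, say $T_{i,1}$, of a fixed Lie type of $r$-bounded Lie rank over $\F_{q_i}$ with $q_i\to\infty$.

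\emph{Extracting the definable simple normal subgroup.} I would next produce, uniformly in $i$, a formula $\phi(x,\bar y)$ together with parameters $\bar a_i\in G_i$ such that $\phi(G_i,\bar a_i)$ is a normal subgroup of $G_i$ recovering $T_{i,1}$ in the limit. The conjugation action of $G_i$ on the set of socle components factors through a permutation group of $r$-bounded order, so the stabilizer of $T_{i,1}$ has bounded index in $G_i$; combining this with Wilson's formula (Fact \ref{F:Solvable}) applied inside centralizers of suitably chosen involutions and with the explicit Lie-type structure from Lemma \ref{L:Fin2rk}, one carves $T_{i,1}$ out as a uniformly definable normal subgroup. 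The ultraproduct $N:=\prod_{\mathcal U} T_{i,1}$ is then an infinite definable normal subgroup of $H$, and by the Wilson--Ryten description of simple pseudofinite groups recalled in the introduction it is isomorphic to a Chevalley group over a pseudofinite field, hence a linear group. Elementary equivalence $G\preceq H$ transfers the construction back to $G$.

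The main obstacle is the uniform isolation of a single simple component as a subgroup normal in $G$: a priori the $G_i$-action on the socle components could be nontrivial, forcing a minimal normal subgroup of $G_i$ to be a direct product of several isomorphic simple factors rather than a single simple group. The bounded $2$-rank controls the size of the relevant permutation orbit, and the delicate step is leveraging this boundedness, together with well-chosen parameters, to cut out an individual simple factor instead of merely a characteristically simple product.
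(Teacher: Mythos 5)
Your reduction to an ultraproduct of semisimple finite groups of $2$-rank at most $r$, the socle decomposition with at most $r$ simple factors, and the use of Lemma \ref{L:Fin2rk} to isolate a component of fixed Lie type and unbounded order all match the paper's argument. But the step you yourself flag as ``the main obstacle'' --- uniformly defining a single simple factor --- is exactly where your proposal has no actual proof. The sketch you offer (Wilson's formula inside centralizers of involutions, plus the bounded index of the stabilizer of a component) does not produce a formula $\psi(x,\bar y)$ with the required uniformity, and it is not the mechanism the paper uses. The missing ingredient is \cite[Theorem 4.2]{MacTent}: for each Lie type and Lie rank $\tau$ there is an integer $m=m(\tau)$ such that any finite simple group of that type equals $(C\cup C^{-1})^{*m}$ for every non-trivial conjugacy class $C$ with $|C|>m$. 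This bounded conjugacy width is what makes the component $S_{i,j_i}$ uniformly definable from a single well-chosen element $a_i$, and it is also what lets one transfer the definition to $G$ by \L os's Theorem. Without this (or an equivalent bounded-generation statement) the construction does not go through.

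Two further points. First, the subgroup you obtain in the limit is a priori only \emph{definably} simple: an ultraproduct of finite simple groups need not be simple, so your appeal to the Wilson--Ryten description of simple pseudofinite groups is premature. The paper derives genuine simplicity precisely from the Macpherson--Tent width bound: every non-trivial conjugacy class of the definably simple limit group is infinite (a finite class would give a definable proper normal centralizer), hence has size $>m$ and generates the whole group in $m$ steps. Second, for linearity the paper does not need Wilson--Ryten; it applies Frayne's Theorem to embed $\psi(G,a)$ elementarily into an ultrapower of $\prod_{\U}S_{i,j_i}$ and invokes Point's theorem that such an ultraproduct of finite simple groups of a \emph{fixed} Lie type is a group of Lie type over a pseudofinite field, hence linear, so the subgroup $\psi(G,a)$ is linear as well. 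Your route could be repaired along these lines, but as written the central definability step and the simplicity of the resulting subgroup are both gaps.
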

\begin{proof}
Assume that $G$ is a semisimple pseudofinite group of $2$-rank at most $r$. Thus, it is elementarily equivalent to an ultraproduct of an infinite family $\{F_i\}_{i\in I}$ of finite groups with respect to a non-principal ultrafilter $\U$ on the set $I$. After shrinking $I$ and $\U$ if necessary, we assume that all $F_i$ are semisimple and of $2$-rank at most $r$. Notice that the latter property is clearly expressible by a first-order sentence asserting the non existence of $r+1$ elements of order $2$ which commute pairwise. 
 
Now, set ${\rm Soc}(F_i)$ to denote the socle of $F_i$, the subgroup of $F_i$ generated by all minimal normal subgroups. Thus $$
{\rm Soc}(F_i) = S_{i,1} \times \ldots \times S_{i,k_i},
$$
where each $S_{i,j}$ is a non-abelian simple normal subgroup of $F_i$. Note that for every $i$ we have $k_i \le r$ by the Feit-Thompson Theorem, since each $S_{i,j}$ contains an element of order $2$ and any two $S_{i,j}$ and $S_{i,l}$ commute. Furthermore, by Lemma \ref{L:Fin2rk} there is a positive integer $\delta(r)$, depending only on $r$, such that either $|S_{i,j}| \le \delta(r)$ or $S_{i,j}$ is a simple group of Lie type of Lie rank at most $\delta(r)$. 
On the other hand, as we are assuming that $G$ is infinite, the size of the groups $F_i$ is unbounded with respect to $\U$. Hence, since $C_{F_i}({\rm Soc}(F_i))=1$, the group $F_i$ acts faithfully by conjugation on ${\rm Soc}(F_i)$ and so it embeds into the group of automorphisms of ${\rm Soc}(F_i)$. Thus, the size of the socle ${\rm Soc}(F_i)$ must be unbounded with respect to $\U$ as well. Consequently, as there is only a finite number of possible Lie ranks, there is some $\tau\le\delta(r)$ and a subset $I_0$ in $\U$ such that for each $i\in I_0$ some $S_{i,j_i}$ is a simple group of a fixed Lie type of Lie rank $\tau$ and moreover, the size of all these $S_{i,j_i}$ is unbounded with respect to $\U$. 

Next, we see that every $S_{i,j_i}$ with $i\in I_0$ can be defined in a uniform way. To do so, we use \cite[Theorem 4.2]{MacTent}, which asserts the existence of a positive integer $m=m(\tau)$ such that any finite simple group of a fixed Lie type of Lie rank $\tau$ is equal to the set $(C\cup C^{-1})^{*m}$ for any non-trivial conjugacy class $C$ with $|C|> m$. Hence, using this result, we find a formula $\psi(x,y)$ in the language of groups such that for some non-trivial element $a_i$ in $S_{i,j_i}$ we have that $\psi(F_i, a_i) = S_{i,j_i}$ and such that the $S_{i,j_i}$-conjugacy class $C_i$ of any non-trivial element of $F_i$ satisfying $\psi(x,a_i)$ generates $S_{i,j_i}$ in $m$ many steps, {\it i.e.} $S_{i,j_i} = (C_i\cup C_i^{-1})^{*m}$, provided that $|C_i|>m$.

By \L os's Theorem, we can find an element $a$ in $G$ such that the subset $\psi( G,a)$ is an infinite definable normal subgroup of $G$ and that $\psi(G,a) = (C\cup C^{-1})^{*m}$ for any $\psi(G,a)$-conjugacy class $C$ of every non-trivial element of $\psi(G,a)$ with $|C|>m$. In fact, since $\psi(G,a)$ is a definably simple group, any $\psi(G,a)$-conjugacy class $C$ of a non-trivial element of $\psi(G,a)$ must be infinite, as otherwise the centralizer of $C$ in $\psi(G,a)$ would be a definable (finite index) normal subgroup, yielding a contradiction. As a consequence, it then follows that the group $\psi(G,a)$ is simple. 

Finally, for the second part of the statement, note that $\psi(G,a)$ is elementarily equivalent to the ultraproduct $\prod_{\mathcal U} S_{i,j_i}$ and so it is elementarily embeddable into an ultrapower of $\prod_\U S_{i,j_i}$ by Frayne's Theorem \cite[Corollary 4.3.13]{ChanKei}. By a result of Point \cite{Poi}, such an ultrapower is isomorphic to a simple group of Lie type over a pseudofinite field. Hence, the 
simple group $\psi(G,a)$ is a linear subgroup.
\end{proof}

\begin{remark}
In fact, the proof of the lemma yields the existence of a simple group of Lie type by Ryten's results. Namely, we have shown that the simple group $\psi(G,a)$ is elementarily equivalent to the ultraproduct $\prod_{\mathcal U} S_{i,j_i}$ of simple groups $S_{i,j_i}$ of a fixed Lie type. Therefore, applying \cite[Proposition 2.14(i)]{EJMR} we get that $\psi(G,a)$ is a simple  group of Lie type.
\end{remark}

Before proving the main theorem, we deduce a weak version of Platonov's Theorem for pseudofinite groups.

\begin{cor}\label{C:Platonov}
Any pseudofinite group of finite Pr\"ufer rank is virtually elementarily equivalent to an ultraproduct of finite solvable groups.
\end{cor}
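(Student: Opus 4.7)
The plan is to take $R$ to be the subgroup of $G$ defined by Wilson's formula $\phi_s$ from Fact \ref{F:Solvable}, and show both that $[G:R]<\infty$ and that $R$ is elementarily equivalent to an ultraproduct of finite solvable groups. By Frayne's Theorem, realise $G\preceq \prod_\U G_i$ for some family $\{G_i\}_{i\in I}$ of finite groups. Since $\phi_s$ has no parameters and defines the solvable radical $R(G_i)$ in each $G_i$, a standard relativisation argument combined with {\L}o\'s's Theorem yields $R=\phi_s(G)\preceq \phi_s(\prod_\U G_i)=\prod_\U R(G_i)$, which is an ultraproduct of finite solvable groups.

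It remains to prove that $[G:R]<\infty$. The quotient $G/R$ is pseudofinite by \cite[Lemma 2.16]{OulPoi} and semisimple in the sense of the paper, since each $G_i/R(G_i)$ is semisimple and, as noted after Fact \ref{F:Solvable}, semisimplicity is first-order. Suppose toward a contradiction that $G/R$ is infinite. Since Pr\"ufer rank is inherited by subgroups and quotients, $G/R$ has finite Pr\"ufer rank and in particular finite $2$-rank. Lemma \ref{L:Semi-simRank} together with the Remark following it then supplies an infinite definable normal subgroup $N$ of $G/R$ that is isomorphic, as an abstract group, to a simple group of Lie type over an infinite pseudofinite field $F$.

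The contradiction will come from incompatible bounds on the Pr\"ufer rank of $N$. As a subgroup of $G/R$, the group $N$ has Pr\"ufer rank bounded by that of $G$. On the other hand, $N$ contains a root subgroup isomorphic to the additive group $(F,+)$. If $F$ has positive characteristic $p$, then $(F,+)$ is an infinite elementary abelian $p$-group and clearly has infinite Pr\"ufer rank; in characteristic zero, a pseudofinite field is infinite-dimensional over $\mathbb{Q}$, so $(F,+)$ contains $\mathbb{Z}^n$ as a subgroup for every $n$ and again has infinite Pr\"ufer rank. This contradicts the finiteness of the Pr\"ufer rank of $G$, forcing $G/R$ to be finite.

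The step I expect to be the main obstacle is precisely this final rank comparison. The positive-characteristic case is transparent, but in characteristic zero one must invoke that pseudofinite fields have infinite dimension over $\mathbb{Q}$; this is a consequence of the fact that a pseudofinite field is pseudo-algebraically closed, which no number field is. The argument is standard but requires a brief explicit justification to pin down cleanly in the write-up.
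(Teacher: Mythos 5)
Your proof is correct, and its skeleton is the same as the paper's: pass to the definable characteristic subgroup $\phi_s(G)$ of Fact \ref{F:Solvable}, note that the quotient is a semisimple pseudofinite group of finite Pr\"ufer rank, and apply Lemma \ref{L:Semi-simRank} to extract an infinite definable simple normal subgroup $N$; your relativisation argument showing $\phi_s(G)\preceq\prod_{\mathcal U} R(G_i)$ correctly fills in the step the paper dismisses with ``which yields the result by Fact \ref{F:Solvable}''. Where you genuinely diverge is the final contradiction. The paper uses only the conclusion actually stated in Lemma \ref{L:Semi-simRank} --- that $N$ is a \emph{linear} group --- and then quotes Platonov's theorem \cite[Theorem 10.9]{We}: a linear group of finite Pr\"ufer rank is solvable-by-finite, so an infinite simple such group cannot exist. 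You instead invoke the stronger identification from the Remark (Ryten's result via \cite{EJMR}) to realise $N$ as a group of Lie type over an infinite pseudofinite field $F$ and exhibit a root subgroup of infinite Pr\"ufer rank, handling characteristic zero by the PAC argument that $F$ cannot be a number field. Both routes work. The paper's is shorter and uniform in the characteristic but leans on Platonov's theorem as a black box; yours needs the finer Lie-type structure and the small field-theoretic digression you flag (and, to be scrupulous, for twisted types one should say ``a root subgroup isomorphic to the additive group of an infinite subfield of $F$'', which changes nothing), but in exchange the contradiction is concrete and self-contained. Incidentally, in characteristic zero one can avoid the PAC argument altogether: $N$ contains a copy of $({\rm P}){\rm SL}_2$ over a characteristic-zero field, hence a nonabelian free subgroup, which already has infinite Pr\"ufer rank.
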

\begin{proof}
Let $G$ be a pseudofinite group of finite Pr\"ufer rank, and consider the definable characteristic subgroup $\phi_s(G)$ of $G$. Set $\bar G$ to denote $G/\phi_s(G)$, a semisimple group of finite Pr\"ufer rank, and suppose that it is infinite. By Lemma \ref{L:Semi-simRank}, we know that $\bar G$ contains an infinite simple normal subgroup $N$ which is a linear group. Hence, it is solvable-by-finite by Platonov's Theorem, see for instance \cite[Theorem 10.9]{We}, and consequently it is finite, a contradiction. Therefore, we obtain that $\bar G$ must be finite, which yields the result by Fact \ref{F:Solvable}. \end{proof}

In order to show Theorem \ref{T:Main}, we shall state and prove the following slightly stronger result. Before that, recall that the {\em upper rank} of a group is defined as the maximum, if it exists, of the Pr\"ufer rank of all its finite quotients.

\begin{prop}\label{P:Main}
There is no finitely generated pseudofinite group $G$ such that $G/\phi_s(G)$ has finite $2$-rank and that $\phi_s(G)$ has finite upper rank.
\end{prop}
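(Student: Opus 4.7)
My plan is to argue by contradiction and show that such a $G$ must be finite, contradicting pseudofiniteness. Set $H := \phi_s(G)$.

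Step 1 (showing $\bar G := G/H$ is finite). Being a definable factor of the finitely generated pseudofinite group $G$, the group $\bar G$ is finitely generated, and pseudofinite (if infinite) by \cite[Lemma 2.16]{OulPoi}. It is also semisimple: for any ultraproduct $\prod_\U F_i$ elementarily equivalent to $G$, one has $\bar G \equiv \prod_\U F_i/R(F_i)$ via the formula $\phi_s$, each factor $F_i/R(F_i)$ is semisimple, and semisimplicity is elementary as observed in the preliminaries. Assuming $\bar G$ infinite and of finite $2$-rank, Lemma \ref{L:Semi-simRank} supplies an infinite definable non-abelian simple normal subgroup $N \trianglelefteq \bar G$ that is linear, and Lemma \ref{L:Sim} then forces $N$ itself to be a finitely generated pseudofinite group. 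This is absurd: Mal'cev's theorem says every finitely generated linear group is residually finite, and an infinite residually finite simple group cannot exist (for $1 \neq g \in N$, a finite quotient separating $g$ has a proper normal kernel, which by simplicity is trivial, embedding $N$ in a finite group). Hence $\bar G$ is finite and $H$ has finite index in $G$.

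Step 2 (showing $H$ is finite). Since $G$ is finitely generated and $\bar G$ is finite, $H$ is finitely generated; were $H$ finite, so would $G$ be, the desired contradiction. Assume therefore that $H$ is infinite, hence pseudofinite, and write $r$ for its upper rank. Applying Fact \ref{F:DerBurn} to $H$, each iterated derived subgroup $H^{(n)}$ is definable of finite index in $H$, so $H/H^{(n)}$ is a finite solvable group of derived length $\leq n$ and of Pr\"ufer rank $\leq r$. By a classical theorem of Kov\'acs, every finite solvable group of Pr\"ufer rank $r$ has derived length bounded by some constant $c(r)$; hence $H/H^{(n)}$ has derived length $\leq c(r)$ for every $n$, and comparing with $n = c(r)+1$ yields $H^{(c(r))} = H^{(c(r)+1)}$, a perfect definable subgroup of finite index. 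To conclude $H^{(c(r))} = 1$, I would embed $G$ elementarily in an ultraproduct $G^\ast = \prod_\U F_i$ of finite groups; then $H$ embeds in $\phi_s(G^\ast) = \prod_\U R(F_i)$, and after refining $\mathcal U$ so that the finite solvable groups $R(F_i)$ have Pr\"ufer rank $\leq r$ for $\U$-almost all $i$, Kov\'acs' bound yields $R(F_i)^{(c(r))} = 1$ almost everywhere, whence $H^{(c(r))} = 1$ by \L os's theorem. Thus $H$ is solvable of derived length $\leq c(r)$, so $G$ is solvable-by-finite and in particular solvable-by-(finite exponent), contradicting \cite[Proposition 3.14]{OulPoi}.

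The main obstacle is precisely the refinement step in Step 2: arranging that the finite solvable groups $R(F_i)$ in an ultraproduct representation of $H$ have Pr\"ufer rank $\leq r$ almost everywhere. Pr\"ufer rank is not a first-order property, so this cannot be achieved by a direct application of \L os's theorem; it requires a more subtle use of the finite generation of $H$ together with the definability of $\phi_s$, in the spirit of the ``finite simple'' refinement trick employed in the proof of Lemma \ref{L:Sim} via \cite[Corollary 2.10]{Urg}.
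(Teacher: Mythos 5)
Your Step 1 coincides with the paper's first part (Lemma \ref{L:Semi-simRank}, then Lemma \ref{L:Sim}, then Mal'cev), so the substance lies in Step 2, and there the proof does not close: the step you yourself flag is a genuine gap, not a technicality. The upper-rank hypothesis only controls the Pr\"ufer ranks of the \emph{finite quotients} of $H=\phi_s(G)$; it gives no hold whatsoever on the ranks of the finite solvable groups $R(F_i)$ occurring in an ultraproduct representation of $H$, and since ``rank $\le r$'' is not elementary there is no refinement of $\U$, in the spirit of \cite[Corollary 2.10]{Urg}, that would force it. So the intended conclusion $R(F_i)^{(c(r))}=1$ almost everywhere is simply not available, and the decisive implication $H^{(c(r))}=H^{(c(r)+1)}\Rightarrow H^{(c(r))}=1$ is left unproved.

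The configuration you reach just before the gap can, however, be killed by transferring the \emph{solvability} of the $R(F_i)$ rather than their rank. Write $c=c(r)$ and $P=H^{(c)}$, a definable finite-index subgroup of $H$ with $P=P'$. By Fact \ref{F:DerBurn} the subgroups $H^{(n)}$ are uniformly definable as bounded products of values of $w_{c,n}$, so if $P\neq 1$ the sentences asserting $H^{(c)}= H^{(c+1)}\neq 1$ transfer by \L os's Theorem to $\U$-almost all $R(F_i)$ and produce there a non-trivial perfect subgroup $R(F_i)^{(c)}$ of a finite solvable group, which is absurd; hence $P=1$, $H$ is finite, and so is $G$. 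This is exactly the mechanism behind the paper's observation in the solvable-like case that $G^{(n+1)}\subsetneq G^{(n)}$ whenever $G^{(n)}\neq 1$. Note also that the paper's own second part takes a different route that avoids the Kov\'acs-type derived-length bound altogether: it first shows that every finite-index normal subgroup of $G=\phi_s(G)$ is definable and contains some $G^{(k)}$, so that all finite quotients of $G$ are solvable of rank at most $r$, and then applies Segal's theorem \cite{Seg} to the residually finite quotient $G/G^{0}$ to conclude that it is nilpotent-by-abelian-by-finite, hence finite, contradicting the strictly decreasing chain of finite-index subgroups $G^{(n)}$. Your derived-length bound, once combined with the solvability transfer just described, would give a correct and somewhat more direct variant; as written, the argument is incomplete at its crucial step.
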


\begin{proof}
 Suppose, towards a contradiction, that $G$ is a finitely generated pseudofinite group witnessing a contradiction, and consider the definable characteristic subgroup $\phi_s(G)$ of $G$. Set $\bar G$ to denote $G/\phi_s(G)$, a finitely generated semisimple group of $2$-rank $r$.

The proof is divided in two parts:

{\em First part (The semisimple case).} We aim to prove that $\bar G$ is finite. Assume otherwise that $\bar G$ is infinite. Thus, it contains an infinite definable simple normal subgroup $N$ which is as well a linear group by Lemma \ref{L:Semi-simRank}. Additionally, it is finitely generated by Lemma \ref{L:Sim}. Hence, by Malcev's Theorem it is residually finite, see for instance \cite[Corollary 4.4]{We}, and consequently it is finite, a contradiction. Therefore, we obtain that $\bar G$ must be finite, as desired.

{\em Second part (The solvable-like case).} After the first part, the subgroup $\phi_s(G)$ has finite index and so it is a finitely generated pseudofinite group of finite upper rank. Hence, we may assume that $G$ and $\phi_s(G)$ coincide, \textit{i.e.} the group $G$ is elementarily equivalent to a non-principal ultraproduct of an infinite family $\{H_i\}_{i\in I}$ of finite solvable groups. 
 
The subgroups $G^{(n)}$ forming the derived series of $G$, as well as the subgroups $G^n$ generated by the $n$th powers, are all definable and have finite index in $G$, by Fact \ref{F:DerBurn}. As each $H_i$ is solvable and the words $w_{c,1}$ have finite width in $G^{(n)}$ also by Fact \ref{F:DerBurn}, an easy application of \L os's Theorem yields that $G^{(n+1)}$ is properly contained in $G^{(n)}$. In particular, the group $G$ has arbitrary large finite quotients.

We claim that each finite quotient of $G$ is solvable. To do so, consider a finite index normal subgroup $N$ of $G$ and notice that it is definable, since it contains the finite index subgroup $G^k$, where $k=[G:N]$, which is also definable. Let $\chi(x,\bar y)$ be a formula in the pure language of groups such that $\chi(G,\bar c)=N$ for some finite tuple $\bar c$ from $G$. By \L os's Theorem, for almost all indices $i\in I$ (with respect to $\U$), we can find some  tuple $\bar c_i$ in $H_i$ such that $\chi(H_i,\bar c_i)$ is a normal subgroup of $H_i$ of index  $k$. Hence, since each $H_i$ is solvable, the $k$th term $H_i^{(k)}$ of the derived series  of $H_i$ must be contained in $\chi(H_i,\bar c_i)$ and so $G^{(k)}$ is contained in $N$ by \L os's Theorem.

To conclude, consider the characteristic subgroup $G^0$ of $G$ defined as the intersection of all finite index normal subgroups of $G$. Note that each finite quotient of $G/G^0$ is solvable and has Pr\"ufer rank at most $r$ by assumption. Thus, using the main result from \cite{Seg} we obtain that $G/G^0$ is nilpotent-by-abelian-by-finite. Therefore, the quotient $G/G^0$ is solvable and hence it must be finite, a contradiction. This finishes the proof. 
\end{proof}

As an immediate consequence we deduce the following:
\begin{cor}\label{C:Main}
There is no finitely generated pseudofinite group of finite Pr\"ufer rank.
\end{cor}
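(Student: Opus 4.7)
The plan is to derive Corollary~\ref{C:Main} directly from Proposition~\ref{P:Main} by checking that a finitely generated pseudofinite group $G$ of finite Pr\"ufer rank $r$ automatically satisfies the two hypotheses of the proposition: namely, that $G/\phi_s(G)$ has finite $2$-rank and that $\phi_s(G)$ has finite upper rank.

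To carry this out, I would first record the well-known fact that Pr\"ufer rank is inherited by subgroups (trivially, since any finitely generated subgroup of a subgroup is a finitely generated subgroup of $G$) and by quotients: if $\pi\colon G\to G/N$ is the canonical projection and $K/N$ is finitely generated, say $K/N=\langle \pi(k_1),\ldots,\pi(k_m)\rangle$, then $K/N$ is the image of the finitely generated subgroup $\langle k_1,\ldots,k_m\rangle\le G$, hence can be generated by $r$ elements. Applying this, the subgroup $\phi_s(G)$ has Pr\"ufer rank at most $r$, so every finite quotient of $\phi_s(G)$ has Pr\"ufer rank at most $r$, and therefore its upper rank is bounded by $r$. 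Likewise, $G/\phi_s(G)$ has Pr\"ufer rank at most $r$, and in particular its $2$-rank is at most $r$, since an elementary abelian $2$-group of rank $k$ requires at least $k$ generators.

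Both hypotheses of Proposition~\ref{P:Main} are thus satisfied, and the proposition produces the desired contradiction. There is no substantive obstacle to overcome in the corollary itself; all of the difficulty sits inside Proposition~\ref{P:Main}, whose proof combines the structural analysis of the semisimple quotient (via Lemma~\ref{L:Semi-simRank}, Lemma~\ref{L:Sim}, and Malcev's residual finiteness for finitely generated linear groups) with Segal's theorem on finite quotients of finitely generated groups of bounded Pr\"ufer rank for the solvable-like part.
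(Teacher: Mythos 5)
Your proof is correct and matches the paper's intent exactly: the paper states Corollary~\ref{C:Main} as an immediate consequence of Proposition~\ref{P:Main}, and your argument simply makes explicit the routine verifications (Pr\"ufer rank passes to subgroups and quotients, bounds the upper rank of $\phi_s(G)$, and bounds the $2$-rank of $G/\phi_s(G)$) that justify the word ``immediate.'' No gaps.
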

We finally give the proof of the main theorem.
\begin{proof}[Proof of Theorem \ref{T:Main}]
Let $G$ be an infinite finitely generated group and suppose towards a contradiction that there is an infinite family $\{G_i\}_{i\in I}$ of finite groups of Pr\"ufer rank $r$ such that $G$ is elementarily equivalent to $\prod_\U G_i$ for some non-principal ultrafilter $\U$ on $I$. 
Consider Wilson's formula $\phi_s(x)$. Note that $G_i/\phi_s(G_i)$ has $2$-rank at most $r$ and then, so does the pseudofinite group $G/\phi_s(G)$ by \L os's Theorem. Thus, the previous proposition yields that $\phi_s(G)$ has finite index in $G$. Hence, after replacing $G$ by $\phi_s(G)$ if necessary, we may assume that each $G_i$ is a solvable group of Pr\"ufer rank at most $r$. Moreover, as remarked in the solvable-like case of the proof of the previous proposition, we have that every finite index subgroup $N$ of $G$ is definable, and so $G$ has upper rank at most $r$. Therefore, applying once more Proposition \ref{P:Main} we obtain the desired contradiction.
\end{proof}

\bibliographystyle{plain}

\end{document}